\newtheorem{theorem}{Theorem}
\newtheorem{proposition}{Proposition}
\newtheorem{lemma}[theorem]{Lemma}
\newcommand*{\la}{\lambda}
\newcommand*{\N}{\mathbb{N}}
\newcommand*{\Q}{\mathbb{Q}}
\newcommand*{\R}{\mathbb{R}}
\newcommand*{\C}{\mathbb{C}}
\newcommand*{\spec}{\mathrm{spec }\,}
\newcommand*{\cH}{\mathcal{H}}
\newcommand*{\cL}{\mathcal{L}}
\newcommand*{\cS}{\mathcal{S}}
\newcommand*{\id}{\mathrm{id}}
\newcommand*{\tr}{\mathrm{tr}}
\newcommand*{\ket}[1]{| #1 \rangle}
\newcommand*{\bra}[1]{\langle #1 |}
\newcommand*{\spr}[2]{\langle #1 | #2 \rangle}
\newcommand*{\proj}[1]{\ket{#1}\bra{#1}}
\newcommand*{\Sym}{\mathrm{Sym}}
\newcommand*{\GL}{\mathsf{GL}}
\newcommand*{\SL}{\mathsf{SL}}
\newcommand*{\SG}{\ensuremath{\mathsf{S}}}
\newcommand{\rect}{\text{\scalebox{1.5}[1]{\ensuremath{\square}}}}
\def\mp{\mathsf{Kron}}
\def\proj{\mathbb{P}}
\def\e{\epsilon}
\def\Id{\mathrm{id}}
\def\ud{{\mathbf u}}
\newcommand{\per}{\mathrm{per}}
\renewcommand{\det}{\mathrm{det}}
\newcommand{\GLC}[1]{\ensuremath{\mathsf{GL}_{#1}(\C)}}
\newcommand{\Weyl}[1]{\mathscr{V}_{#1}}
\newcommand{\Specht}[1]{\mathscr{S}_{#1}}
\begin{document}

\begin{frontmatter}



\title{Nonvanishing of Kronecker coefficients\\ for rectangular shapes }


\author[PB]{Peter B\"urgisser\fnref{gPB}}
\address[PB]{Institute of Mathematics, University of Paderborn, D-33098 Paderborn, Germany}
\fntext[gPB]{Supported by the German Science Foundation (grant BU 1371/3-1 of the SPP 1388 on Representation Theory)}
\ead{pbuerg@upb.de}

\author[MC]{Matthias Christandl\fnref{gM}}
\address[MC]{Institute for Theoretical Physics, ETH Zurich, CH-8093 Zurich, Switzerland}
\fntext[gM]{Supported by the Swiss National Science Foundation (grant PP00P2-128455) and the German Science Foundation (grant~CH 843/1-1 of the SPP 1388 on Representation Theory and grant CH 843/2-1)}
\ead{christandl@phys.ethz.ch} 

\author[PB]{Christian Ikenmeyer\fnref{gPB}}
\ead{ciken@math.upb.de}


%

\begin{abstract}
We prove that for any partition
$(\la_1,\ldots,\la_{d^2})$ of size~$\ell d$
there exists $k\ge 1$ such that
the tensor square of the irreducible representation
of the symmetric group $\SG_{k\ell d}$ with respect to the
rectangular partition $(k\ell,\ldots,k\ell)$ contains
the irreducible representation
corresponding to the stretched partition
$(k\la_1,\ldots,k\la_{d^2})$.
We also prove a related approximate version of this
statement in which the stretching factor~$k$ is
effectively bounded in terms of~$d$.
We further discuss the consequences for geometric complexity theory 
which provided the motivation for this work.
\end{abstract}

\begin{keyword}
Kronecker coefficients \sep
quantum marginal problem \sep
geometric complexity theory \sep
quantum information theory

\MSC 20C30
\end{keyword}

\end{frontmatter}




\section{Introduction}

Kronecker coefficients are the multiplicities occurring in tensor product decompositions
of irreducible representations of the symmetric groups.
These coefficients play a crucial role in geometric complexity theory~\citep{GCT1,GCT2},
which is an approach to arithmetic versions of the famous P versus NP problem
and related questions in computational complexity via geometric representation theory.
As pointed out in \citep{blmw:09} (see Section~\ref{se:connGCT}),  
for implementing this approach,
one needs to identify certain partitions $\la\vdash_{d^2}\ell d$ with the property that a symmetric version of
the Kronecker coefficient associated with $\la,\rect,\rect$ vanishes, where
$\rect:=(\ell,\ldots,\ell)$ stands for the rectangle partition of length~$d$. 
Computer experiments show that such $\la$ occur rarely.
Our main result confirms this experimental finding.
We prove that for any $\la\vdash_{d^2}\ell d$ there exists a stretching factor~$k$
such that the Kronecker coefficient of $k\la,k\rect,k\rect$ is nonzero (Theorem~\ref{th:main1}). 
Here, $k\la$ stands for the partition arising by multiplying all components of $\la$ by~$k$.
We also prove a related approximate version of this statement
(Theorem~\ref{th:main2}) that suggests that the stretching factor~$k$ may
be chosen not too large. Similar results are shown to hold for the symmetric version of the Kronecker coefficient and thus have a bearing on geometric complexity theory (see Lemma~\ref{pro:sKC} and Section~\ref{se:connGCT}).

Our proof relies on a recently discovered connection between Kronecker coefficients
and the spectra of composite quantum states~\citep{Klyachko04,ChrMit06}.
Let $\rho_{AB}$ be the density operator of a bipartite quantum system
and let $\rho_A$, $\rho_B$ denote the density operators corresponding
to the systems $A$ and $B$, respectively. It turns out that
the set of possible triples of spectra
$(\spec\rho_{AB},\spec\rho_{A},\spec\rho_{B})$
is obtained as the closure of the
set of triples $(\overline{\la},\overline{\mu},\overline{\nu})$
of normalized partitions $\la,\mu,\nu$
with nonvanishing Kronecker coefficient,
where we set $\overline{\la}:=\frac1{|\la|}\la$.
For proving the main theorem it is therefore sufficient to construct,
for any prescribed spectrum~$\overline{\la}$, a density matrix
$\rho_{AB}$ having this spectrum and such that the spectra of
$\rho_{A}$ and $\rho_{B}$ are uniform distributions.

The set of possible triples of spectra
$(\spec\rho_{AB},\spec\rho_{A},\spec\rho_{B})$
is interpreted in \citep{Klyachko04} 
as the moment polytope
of a complex algebraic group variety,
thus linking the problem to geometric invariant theory.
We do not not use this connection in our paper. Instead
we argue as in \cite{ChrMit06} using the
estimation theorem of~\cite{kewe:01}.
The exponential decrease rate in this estimation allows
us to derive the bound on the
stretching factor in Theorem~\ref{th:main2}.

\section{Preliminaries}

\subsection{Kronecker coefficients and their moment polytopes}

A {\em partition} $\la$ of $n\in\N$ is a monotonically decreasing sequence
$\la =(\la_1, \la_2, \ldots)$ of natural numbers such that $\lambda_i=0$
for all but finitely many~$i$.
The length $\ell(\la)$ of~$\la$ is defined as the number of its nonzero parts
and its size as $|\la|:=\sum_i\lambda_i$.
One writes $\la\vdash_d n$ to express that $\lambda$ is a partition of $n$
with $\ell(\la)\le d$.
Note that $\bar\la:= \la/n=(\la_1/n, \la_2/n, \ldots)$ defines
a probability distribution on~$\N$.

It is well known~\citep{fuha:91} that the complex irreducible representations
of the symmetric group $\SG_n$ can be labeled by partitions $\la\vdash n$ of $n$.
We shall denote by $\Specht{\la}$ the irreducible representation of $\SG_n$
associated with $\la$.
The {\em Kronecker coefficient} $g_{\la,\mu,\nu}$ associated with
three partitions $\la,\mu,\nu$ of $n$ is defined as the dimension of
the space of $\SG_n$-invariants in the tensor product
$\Specht{\la}\otimes \Specht{\mu}\otimes \Specht{\nu}$.
Note that $g_{\la,\mu,\nu}$ is invariant with respect to a permutation of the partitions.
It is known that $g_{\la,\mu,\nu}=0$ vanishes
if $\ell(\la) > \ell(\mu)\ell(\nu)$.
Equivalently, $g_{\la,\mu,\nu}$
may also be defined as the multiplicity of $\Specht{\la}$
in the tensor product $\Specht{\mu}\otimes \Specht{\nu}$.
If $\mu=\nu$ we define the 
{\em symmetric Kronecker coefficient} $sg^\la_{\mu}$ as the
multiplicity of $\Specht{\la}$ in the symmetric square 
$\Sym^2(\Specht{\mu})$. We note that 
$sg^\la_{\mu} \le g_{\la,\mu,\mu}$. 

The Kronecker coefficients also appear when studying representations of
the general linear groups $\GL_d$ over $\C$.
We recall that rational irreducible $\GL_d$-modules 
are labeled by their highest weight, a monotonically decreasing list
of $d$~integers, cf.\ Fulton and Harris~\cite{fuha:91}.
We will only be concerned with highest weights consisting of
nonnegative numbers, which are therefore of the form $\la\vdash_d k$ for modules of degree~$k$.
We shall denote by ${\Weyl \la}$ the irreducible $\GL_d$-module with highest weight~$\la$.

Suppose now that $\la\vdash_{d_1 d_2} k$.
When restricting with respect to the morphism
$
\GL_{d_1}\times \GL_{d_2}\to \GL_{d_1d_2},(\alpha,\beta)\mapsto \alpha\otimes\beta,
$
then the module~${\Weyl \la}$ splits as follows:
\begin{equation}\label{eq:split}
{\Weyl \la} = \bigoplus_{\mu \vdash_{d_1} k ,\nu\vdash_{d_2} k} g_{\la,\mu,\nu} {\Weyl \mu}\otimes {\Weyl \nu} .
\end{equation}

Even though being studied for more than fifty years,
Kronecker coefficients are only understood in some special cases.
For instance, giving a combinatorial interpretation of the numbers~$g_{\la,\mu,\nu}$
is a major open problem, cf.\ Stanley~\cite{stan:99,stan:00} for more information.

We are mainly interested in whether $g_{\la,\mu,\nu}$ vanishes or not.
For studying this in an asymptotic way one may consider, for fixed
$d=(d_1,d_2,d_3)\in\N^3$ with $d_1\le d_2\le d_3 \le d_1d_2$, the set
$$
\mp(d) := \Big\{ \frac1{n} (\la,\mu,\nu) \mid n \in \N,
 \la\vdash_{d_1} n, \mu\vdash_{d_2} n, \nu\vdash_{d_3} n\ g_{\la, \mu, \nu} \ne 0 \Big \} .
$$
It turns out that $\mp(d)$ is a rational polytope in $\Q^{d_1+d_2+d_3}$.
This follows from general principles from geometric invariant theory,
namely $\mp(d)$ equals the {\em moment polytope} of the projective variety
$\proj(\C^{d_1}\otimes\C^{d_2}\otimes\C^{d_3})$ with respect to the standard action of
the group $\GL_{d_1}\times\GL_{d_2}\times\GL_{d_3}$,
cf.~\citep{mani:97,fran:02,Klyachko04}.
For an elementary proof that $\mp(d)$ is
a polytope see \cite{ChristHarrowMitch07}.

\subsection{Spectra of density operators}

Let $\cH$ be a $d$-dimensional complex Hilbert space and denote by $\cL(\cH)$
the space of linear operators mapping~$\cH$ into itself.
For $\rho \in \cL(\cH)$
we write $\rho \geq 0$  to denote that $\rho$
is positive semidefinite.
By the {\em spectrum} $\spec\rho$ of~$\rho$ we will understand
the vector $(r_1,\ldots,r_d)$ of eigenvalues of $\rho$
in decreasing order, that is, $r_1\ge \cdots \ge r_d$.
The set of {\em density operators} on $\cH$ is defined as
$$
\cS(\cH):= \{\rho \in \cL(\cH) \mid \rho \geq 0, \tr \rho =1\}.
$$
Density operators are the mathematical formalism to describe the states of quantum objects.
The spectrum of a density operator is a probability distribution on
$[d]:=\{1,\ldots,d\}$.

The state of a system composed of particles $A$ and $B$
is described by a density operator on a tensor product of two Hilbert spaces,
$\rho_{AB}\in\cL(\cH_A\otimes\cH_B)$.
The partial trace
$\rho_A = \tr_B(\rho_{AB})\in\cL(\cH_A)$
of $\rho_{AB}$ obtained by tracing over~$B$
then defines the state of particle $A$.
We recall that the {\em partial trace} $\tr_B$ is the linear map
$\tr_B\colon\cL(\cH_A\otimes\cH_B)\to\cL(\cH_A)$
uniquely characterized by the property
$\tr (R\, \tr_B(\rho_{AB})) =\tr(\rho_{AB} R \otimes \Id )$
for all $\rho_{AB}\in  \cL(\cH_A\otimes \cH_B)$ and $R\in\cL(\cH_A)$.

\subsection{Admissible spectra and Kronecker coefficients}\label{se:speK}

The {\em quantum marginal problem} asks for a description of
the set of possible triples of spectra $(\spec\rho_{AB},\spec\rho_{A},\spec\rho_{B})$
for fixed $d_A=\dim\cH_A$ and $d_B=\dim\cH_B$.
In~\citep{ChrMit06,Klyachko04,ChristHarrowMitch07} it was shown that this set
equals the closure of the moment polytope for Kronecker coefficients, so
\begin{equation*}
\overline{\mp(d_A,d_B,d_Ad_B)} = \Big\{(\spec\rho_{AB},\spec\rho_{A},\spec\rho_{B}) \mid
 \rho_{AB}\in\cL(\cH_A\otimes\cH_B) \Big\}.
\end{equation*}
We remark that this result is related to {\em Horn's problem} that asks for the compatibility conditions
of the spectra of Hermitian operators $A$, $B$, and $A+B$ on finite dimensional Hilbert spaces.
\citet{klya:98} gave a similar characterization of these triples of spectra
in terms of the Littlewood-Richardson coefficients.
The latter are the multiplicities occurring in tensor products of irreducible
representations of the general linear groups.
For Littlewood-Richardson coefficients one can actually avoid the asymptotic description
since the so called saturation conjecture is true~\citep{knta:99}.

\subsection{Estimation theorem}

We will need a consequence of the estimation theorem of~\cite{kewe:01}.
The group $\SG_k\times\GL_d$ naturally acts on the tensor power~$(\C^d)^{\otimes k}$.
Schur-Weyl duality describes the isotypical decomposition of this
module as
\begin{equation}\label{eq:isotyp}
 (\C^d)^{\otimes k} = \bigoplus_{\la\vdash_d k} \Specht{\la}\otimes {\Weyl \la} .
\end{equation}
We note that this is an orthogonal decomposition with respect
to the standard inner product on $(\C^d)^{\otimes k}$.
Let $P_\la$ denote the orthogonal projection of
$(\C^d)^{\otimes k}$ onto $\Specht{\la}\otimes {\Weyl \la}$.
The estimation theorem of Keyl and Werner~\citep{kewe:01} states that
for any density operator $\rho\in\cL(\C^d)$ with spectrum~$r$
we have
\begin{equation}\label{eq:est}
\tr (P_\la\,\rho^{\otimes k}) \le (k+1)^{d(d-1)/2}\exp\big(-\frac{k}2 \|\overline{\la} -r\|_1^2\big)
\end{equation}
(see \citep{ChrMit06} for a simple proof).
This shows that the probability distribution
$\overline{\la}\mapsto\tr (P_\la\,\rho^{\otimes k})$ is
concentrated around $r$ with exponential decay
in the distance $\|\overline{\la}-r\|_1$.

\section{Main results}

By a decreasing probability distribution $r$ on $[d^2]$
we understand $r\in\R^{d^2}$ such that
$r_1\ge\cdots \ge r_{d^2}\ge 0$ and $\sum_i r_i =1$.
We denote by
$\ud_d=(\frac1d,\ldots,\frac1d)$ the uniform probability distribution on $[d]$.

\begin{theorem}\label{th:main1}
The following statements are true: 

{\bf (1)}\
For all decreasing probability distributions~$r$ on $[d^2]$,
the triple $(r,\ud_d,\ud_d)$ is contained in $\overline{\mp(d^2,d,d)}$.

{\bf (2)}\
Let $\la\vdash \ell d$ be a partition into at most $d^2$ parts for $\ell,d\ge 1$
and let $\rect:=(\ell,\ldots,\ell)$ denote the rectangular partition of $\ell d$ into $d$ parts.
Then there exists a stretching factor $k\ge 1$ such that
$g_{k\la, k\rect, k\rect} \neq 0$.
\end{theorem}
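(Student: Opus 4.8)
Both parts come out of a single explicit construction. By the description of admissible spectra recalled in Section~\ref{se:speK}, part~{\bf (1)} is equivalent to the following: for every decreasing probability distribution $r$ on $[d^{2}]$ there is a bipartite density operator $\rho_{AB}\in\cL(\C^{d}\otimes\C^{d})$ with $\spec\rho_{AB}=r$ and $\rho_{A}=\rho_{B}=\tfrac1d\Id$. Granting this, part~{\bf (2)} follows by de-normalizing: since $\ell(\la)\le d^{2}$ the vector $\overline{\la}$ is a decreasing probability distribution on $[d^{2}]$, and $\overline{\rect}=\ud_{d}$, so part~{\bf (1)} places the rational point $(\overline{\la},\overline{\rect},\overline{\rect})$ in the rational polytope $\overline{\mp(d^{2},d,d)}$; it then remains to argue that a rational point of this polytope becomes an honest Kronecker triple after multiplication by a suitable positive integer.

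\textbf{The construction.} Write $\cH_{A}=\cH_{B}=\C^{d}$ with orthonormal basis $(\ket j)_{j\in\Z/d\Z}$, let $\ket{\Phi}:=\tfrac1{\sqrt d}\sum_{j}\ket j\otimes\ket j$, and let $X,Z$ be the shift and clock unitaries $X\ket j=\ket{j+1}$ (index mod $d$), $Z\ket j=\omega^{j}\ket j$, with $\omega:=e^{2\pi\sqrt{-1}/d}$. For $a,b\in\{0,\dots,d-1\}$ set $W_{a,b}:=X^{a}Z^{b}$ and $\ket{\Phi_{a,b}}:=(W_{a,b}\otimes\Id)\ket{\Phi}$. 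From $\braket{\Phi_{a,b}}{\Phi_{a',b'}}=\tfrac1d\tr(W_{a,b}^{\dagger}W_{a',b'})$ one checks that the $d^2$ vectors $\ket{\Phi_{a,b}}$ form an orthonormal basis of $\C^{d}\otimes\C^{d}$, and since each $\ket{\Phi_{a,b}}$ is the local unitary image of the maximally entangled $\ket{\Phi}$, both of its marginals equal $\tfrac1d\Id$. Picking a bijection $\iota\colon\{0,\dots,d-1\}^{2}\to\{1,\dots,d^{2}\}$, the operator
\[
  \rho_{AB}:=\sum_{a,b} r_{\iota(a,b)}\,\ket{\Phi_{a,b}}\bra{\Phi_{a,b}}
\]
is a density operator, diagonal in an orthonormal basis with eigenvalue list $r$, whence $\spec\rho_{AB}=r$; and by linearity of the partial trace and $\sum_{a,b}r_{\iota(a,b)}=1$ one gets $\rho_{A}=\rho_{B}=\tfrac1d\Id$, with spectrum $\ud_{d}$. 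This proves part~{\bf (1)}.

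\textbf{De-normalization.} For part~{\bf (2)} apply part~{\bf (1)} with $r=\overline{\la}$, which puts $q:=(\overline{\la},\overline{\rect},\overline{\rect})=(\overline{\la},\ud_{d},\ud_{d})$ in $\overline{\mp(d^{2},d,d)}$. Let $K$ denote the Kronecker semigroup, the set of triples $(\alpha,\beta,\gamma)$ of partitions of a common size with $g_{\alpha,\beta,\gamma}\ne 0$; it is closed under componentwise addition, and $\overline{\mp(d^{2},d,d)}$ is the slice of the rational polyhedral cone $\mathrm{cone}(K)$ obtained by normalizing all three sizes to $1$. Hence the integer triple $\ell d\cdot q=(\la,\rect,\rect)$ lies in $\mathrm{cone}(K)$; writing it as a nonnegative rational combination of finitely many generators of $K$ and clearing denominators produces an integer $k\ge 1$ with $k(\la,\rect,\rect)\in K$, i.e.\ $g_{k\la,k\rect,k\rect}\ne 0$.

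\textbf{Main obstacle.} The construction behind part~{\bf (1)} is short once one notices that a mixture of mutually orthogonal maximally entangled pure states automatically has prescribable spectrum and maximally mixed marginals; the real input there is the identification of admissible spectra with the Kronecker moment polytope from Section~\ref{se:speK}. The step that needs more than bookkeeping is the de-normalization: turning ``the normalized triple lies in the moment polytope'' into ``an integer dilate has nonzero Kronecker coefficient'' uses the semigroup property of Kronecker coefficients together with the finite generation (equivalently, rational polyhedrality) of $\mp(d^{2},d,d)$, i.e.\ the fact that $\mp(d^{2},d,d)$ equals the set of rational points of its closure.
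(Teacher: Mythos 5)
Your proposal is correct and follows essentially the same route as the paper: part (1) via the mixture of mutually orthogonal generalized Bell states $(X^aZ^b\otimes\Id)\ket{\Phi}$ with maximally mixed marginals, combined with the spectral characterization of $\overline{\mp(d^2,d,d)}$, and part (2) via the observation that a rational point of the rational polytope $\overline{\mp(d^2,d,d)}$ admits an integer dilate with nonvanishing Kronecker coefficient. Your semigroup/finite-generation elaboration of the de-normalization step is just a spelled-out version of the paper's appeal to the rational polyhedrality of $\mp(d^2,d,d)$.
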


The next result indicates that the stretching factor~$k$
may be chosen not too large.

\begin{theorem}\label{th:main2}
Let $\la\vdash_{d^2} \ell d$ and $\e>0$.
Then there exists a stretching factor $k=O(\frac{d^4}{\e^2}\log\frac{d}{\e})$
and there exist partitions $\Lambda\vdash_{d^2} k\ell d$ and
$R_1,R_2\vdash_d k \ell d$ of $k\ell d$ such that
$g_{k\la, R_1, R_2} \neq 0$ and
$$
\|\Lambda - k\la\|_1 \le \e |\Lambda | ,\quad
\,\|R_i - k\rect\|_1 \le \e |R_i| \quad\mbox{ for $i=1,2$.}
$$
\end{theorem}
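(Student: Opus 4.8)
The plan is to carry out the proof of Theorem~\ref{th:main1}(1) quantitatively, exploiting the exponential decay rate in the estimation theorem~\eqref{eq:est} to bound the number of tensor copies needed; I read the conclusion as $g_{\Lambda,R_1,R_2}\ne 0$ (the first argument being the \emph{approximate} stretched partition $\Lambda$ rather than $k\la$ itself), and I set $n:=k\ell d$, fixing $k$ at the end. First I would produce the model state. Since $\la$ has at most $d^2$ parts, $\bar\la$ is a probability distribution on $[d^2]$; identifying $\C^{d^2}=\cH_A\otimes\cH_B$ with $\cH_A=\cH_B=\C^d$ and taking the maximally entangled basis $\ket{\psi_{a,b}}=\frac1{\sqrt d}\sum_{j\in\Z_d}\omega^{aj}\ket{j}\otimes\ket{j+b}$ for $(a,b)\in\Z_d\times\Z_d$, $\omega=e^{2\pi i/d}$, each $\ket{\psi_{a,b}}\bra{\psi_{a,b}}$ has both partial traces equal to $\frac1d\Id$. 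Enumerating the pairs $(a,b)$ by $1,\dots,d^2$ and putting $\rho_{AB}:=\sum_{a,b}\bar\la_{(a,b)}\ket{\psi_{a,b}}\bra{\psi_{a,b}}$ gives a density operator with $\spec\rho_{AB}=\bar\la$ and $\rho_A=\rho_B=\frac1d\Id$; this is exactly the refinement of Theorem~\ref{th:main1}(1) the argument needs.

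Next I would set up the projection calculus on $(\cH_A\otimes\cH_B)^{\otimes n}\cong\cH_A^{\otimes n}\otimes\cH_B^{\otimes n}$. For $\Lambda\vdash_{d^2}n$ let $P_\Lambda$ be the Schur--Weyl projection of $(\C^{d^2})^{\otimes n}$ onto $\Specht\Lambda\otimes\Weyl\Lambda$ from~\eqref{eq:isotyp}; it lies in the group algebra of the diagonal $\SG_n$ permuting the $n$ copies of $\cH_A\otimes\cH_B$. For $R\vdash_d n$ let $Q_R=\frac{\dim\Specht R}{n!}\sum_{\pi}\chi_R(\pi)\pi$ be the Schur--Weyl projection of $(\C^d)^{\otimes n}$, a \emph{central} idempotent of $\C[\SG_n]$, and write $Q^A_{R_1}$, $Q^B_{R_2}$ for its copies acting on $\cH_A^{\otimes n}$, $\cH_B^{\otimes n}$. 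Centrality of $Q^A_{R_1}$ in $\C[\SG_n]$ makes it commute with every permutation of the $A$-factors, it commutes trivially with every permutation of the $B$-factors, hence with $P_\Lambda$, and of course with $Q^B_{R_2}$; thus $\{P_\Lambda\}$, $\{Q^A_{R_1}\}$, $\{Q^B_{R_2}\}$ are three pairwise commuting families of orthogonal projections, each summing to the identity, so $\Pi_{\Lambda,R_1,R_2}:=P_\Lambda Q^A_{R_1}Q^B_{R_2}$ is an orthogonal projection with $\sum_{\Lambda,R_1,R_2}\Pi_{\Lambda,R_1,R_2}=\Id$. Intersecting ranges and using that on the $(R_1,R_2)$-isotypic component $\Specht{R_1}\otimes\Specht{R_2}\otimes\Weyl{R_1}\otimes\Weyl{R_2}$ of $(\C^{d^2})^{\otimes n}$ the operator $P_\Lambda$ acts only on the $\Specht{R_1}\otimes\Specht{R_2}$ factor, where it effects the restriction of $\Specht{R_1}\otimes\Specht{R_2}$ to the diagonal $\SG_n$ (cf.~\eqref{eq:split}), one sees that the range of $\Pi_{\Lambda,R_1,R_2}$ consists of $g_{\Lambda,R_1,R_2}$ copies of $\Specht\Lambda\otimes\Weyl{R_1}\otimes\Weyl{R_2}$; in particular $\Pi_{\Lambda,R_1,R_2}=0$ whenever $g_{\Lambda,R_1,R_2}=0$. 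I then put $p(\Lambda,R_1,R_2):=\tr(\Pi_{\Lambda,R_1,R_2}\rho_{AB}^{\otimes n})$: since each $\Pi_{\Lambda,R_1,R_2}$ is a projection and $\rho_{AB}^{\otimes n}\ge 0$, this is a probability distribution on triples of partitions of $n$, and summing out two of the indices (using $\sum_\Lambda P_\Lambda=\Id$, $\sum_R Q_R=\Id$ and $\tr_B\rho_{AB}^{\otimes n}=\rho_A^{\otimes n}$) shows its three marginals are precisely $\tr(P_\Lambda\rho_{AB}^{\otimes n})$, $\tr(Q_{R_1}\rho_A^{\otimes n})$, $\tr(Q_{R_2}\rho_B^{\otimes n})$ — exactly the distributions to which~\eqref{eq:est} applies, with dimension $d^2$ and centre $\bar\la$ for the first and dimension $d$ and centre $\ud_d$ for the other two.

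The endgame is a union bound. There are at most $(n+1)^{d^2}$ partitions of $n$ with at most $d^2$ parts and at most $(n+1)^{d}$ with at most $d$ parts, so by~\eqref{eq:est} the $p$-probability that $\|\bar\Lambda-\bar\la\|_1>\e$ is at most $(n+1)^{d^2(d^2+1)/2}e^{-n\e^2/2}$, and likewise for the events $\|\bar R_i-\ud_d\|_1>\e$. Hence whenever $3(n+1)^{d^2(d^2+1)/2}e^{-n\e^2/2}<1$ there is a triple with $p(\Lambda,R_1,R_2)>0$ — so $g_{\Lambda,R_1,R_2}\ne 0$ — satisfying $\|\bar\Lambda-\bar\la\|_1\le\e$ and $\|\bar R_i-\ud_d\|_1\le\e$; since $\overline{k\la}=\bar\la$, $\overline{k\rect}=\ud_d$ and $|\Lambda|=|R_i|=n$, multiplying by $n$ turns these into $\|\Lambda-k\la\|_1\le\e|\Lambda|$ and $\|R_i-k\rect\|_1\le\e|R_i|$. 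Taking logarithms, the required inequality reads $\tfrac{n\e^2}2>\ln 3+\tfrac{d^2(d^2+1)}2\ln(n+1)$, which an elementary estimate shows holds for all $n\ge C\tfrac{d^4}{\e^2}\log\tfrac d\e$ with an absolute constant $C$; choosing $k:=\bigl\lceil\tfrac1{\ell d}\,C\tfrac{d^4}{\e^2}\log\tfrac d\e\bigr\rceil$ and $n:=k\ell d$ is then admissible (as $\ell d\ge 1$) and gives $k=O\bigl(\tfrac{d^4}{\e^2}\log\tfrac d\e\bigr)$.

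The main obstacle is the bookkeeping in the middle step: the crucial and slightly delicate point is that the $\GL_d$-isotypic projections are \emph{central} in $\C[\SG_n]$, which is what makes the three natural projection families commute, so that $p$ is a genuine probability distribution whose three marginals are simultaneously the three distributions governed by~\eqref{eq:est} — without this one cannot apply the estimation theorem to all three ``spectra'' at once. Everything afterwards is a routine union bound, the only remaining technical item being the elementary inequality for $n$ that yields the stated dependence of $k$ on $d$ and $\e$; one should also note that the degenerate case $d=1$ (where $g_{(n),(n),(n)}=1$ makes the statement trivial) can be disposed of separately.
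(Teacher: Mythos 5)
Your proposal is correct and follows essentially the same route as the paper's proof: the same mixture of maximally entangled states from Proposition~\ref{le:densityoperator}, the Schur--Weyl projections, the estimation theorem~\eqref{eq:est} together with the $(n+1)^{d}$ count of partitions, and a three-event union bound (which in the paper appears as the three traces each being made $<\tfrac13$). Your reading of the conclusion as $g_{\Lambda,R_1,R_2}\ne 0$ matches what the paper actually proves, and your explicit treatment of the commuting projection families and of the tensor power $n=k\ell d$ versus the stretching factor $k$ merely tidies up points the paper leaves implicit.
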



Suppose that $g_{\la,\mu,\mu}\ne 0$.
By stretching the partitions $\la,\mu$ with two,  
we can guarantee that the corresponding symmetric Kronecker coefficients
does not vanish either.

\begin{lemma}\label{pro:sKC}
Let $\la, \mu  \vdash n$. 
If $\Specht{\la}$ occurs in $\Specht{\mu}\otimes \Specht{\mu}$, then 
$\Specht{2\la}$ occurs in $\Sym^2(\Specht{2\mu})$. In other words, $g_{\lambda, \mu, \mu}\neq 0$ implies $sg_{2\mu}^{2\lambda}\neq 0$.
\end{lemma}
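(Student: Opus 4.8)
The plan is to relate the decomposition of the symmetric square $\Sym^2(\Specht{2\mu})$ of the Specht module indexed by the stretched partition $2\mu$ to the decomposition of the (full) tensor square $\Specht{\mu}\otimes\Specht{\mu}$. The key algebraic input is the classical rule for plethysm with the Schur function $s_{(2)}$ (equivalently, $\Sym^2$) versus $s_{(1,1)}$ (equivalently $\wedge^2$): for any representation $V$ of a finite group $G$ one has $V\otimes V = \Sym^2(V)\oplus\wedge^2(V)$, and the character of $\Sym^2(V)$ at $g$ is $\tfrac12(\chi_V(g)^2+\chi_V(g^2))$, while the character of $\wedge^2(V)$ is $\tfrac12(\chi_V(g)^2-\chi_V(g^2))$. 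So the question of whether $\Specht{2\la}$ occurs in $\Sym^2(\Specht{2\mu})$ is governed by the sign of the ``$g^2$'' term.

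First I would record the hypothesis $g_{\la,\mu,\mu}\ge 1$, i.e.\ $\langle \chi_{\Specht{\mu}}^2,\chi_{\Specht{\la}}\rangle_{\SG_n}\ge 1$. Next I would use the stretching map $\SG_n\hookrightarrow\SG_{2n}$ that doubles each point (replacing a permutation $\sigma\in\SG_n$ by $\sigma\times\sigma$ acting on $2n$ points in two parallel copies; more precisely, the relevant combinatorial fact is that the character of $\Specht{2\mu}$ restricted appropriately, or evaluated on the doubled cycle type, relates to $\chi_{\Specht{\mu}}$). The crucial classical identity here is that of Littlewood: the character value $\chi^{2\mu}$ on a permutation all of whose cycles have even length $2c_i$ equals, up to sign, $\chi^{\mu}$ on the permutation with cycle lengths $c_i$ — and this sign is exactly what makes the ``$\chi_V(g^2)$'' term in the symmetric-square character contribute with the \emph{same} sign as the ``$\chi_V(g)^2$'' term when one tests against $\Specht{2\la}$. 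Concretely, I expect the proof to reduce to showing
\[
\big\langle \chi_{\Sym^2(\Specht{2\mu})},\chi_{\Specht{2\la}}\big\rangle_{\SG_{2n}}
\;\ge\;
\big\langle \chi_{\Specht{\mu}}\otimes\chi_{\Specht{\mu}},\chi_{\Specht{\la}}\big\rangle_{\SG_n}
\;=\; g_{\la,\mu,\mu},
\]
with the gain coming because both the ``square'' and the ``power-map'' contributions to the symmetric-square character pair nonnegatively (indeed positively, by the hypothesis) with $\chi_{\Specht{2\la}}$, rather than partially cancelling.

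The step I expect to be the main obstacle is making the sign bookkeeping in Littlewood's formula precise: one must check that the factor $(-1)^{\text{something}}$ appearing in $\chi^{2\la}(\text{cycle type }2\alpha)=\pm\chi^{\la}(\alpha)$ and the corresponding factor for $2\mu$ combine so that $\chi^{2\la}(g^2)$ and $\chi^{\Specht{2\mu}}(g^2)$ have compatible signs along the locus of elements $g\in\SG_{2n}$ relevant to the pairing — equivalently, that $\wedge^2(\Specht{2\mu})$ does \emph{not} swallow the copy of $\Specht{2\la}$. An alternative, perhaps cleaner route that I would try first, avoiding explicit character computations: realize $\Specht{2\mu}$ inside $\Specht{\mu}\otimes\Specht{\mu}$-type data via the \emph{outer}/\emph{inner} plethysm ``$\mu\mapsto 2\mu$ is the $\SG_2$-fixed part of $\mu\boxtimes\mu$ induced up'', use that $\Specht{2\la}\subseteq \Specht{2\mu}\otimes\Specht{2\mu}$ follows from $\Specht{\la}\subseteq\Specht{\mu}\otimes\Specht{\mu}$ by stretching each factor (a monotonicity already implicit in the semigroup property of Kronecker nonvanishing), and then argue that the $\SG_2$-action permuting the two tensor factors can be chosen to act \emph{trivially} on a suitable copy of $\Specht{2\la}$ inside $\Specht{2\mu}\otimes\Specht{2\mu}$, forcing it into the symmetric part. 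The heart of the matter, either way, is exhibiting an \emph{explicit $\SG_{2n}$-equivariant embedding} $\Specht{2\la}\hookrightarrow\Sym^2(\Specht{2\mu})$ built functorially from a given embedding $\Specht{\la}\hookrightarrow\Specht{\mu}\otimes\Specht{\mu}$, and I would spend my effort on constructing that map rather than on the character inequality.
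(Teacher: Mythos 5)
There is a genuine gap. You correctly isolate the crux: granting $g_{2\la,2\mu,2\mu}>0$ (which does follow from $g_{\la,\mu,\mu}>0$ by the semigroup property), one must still show that the involution exchanging the two tensor factors has a $+1$ eigenvector on the multiplicity space of $\Specht{2\la}$ in $\Specht{2\mu}\otimes\Specht{2\mu}$. Neither of your routes supplies a mechanism for this. In the character route, the ``crucial classical identity'' is false as stated: for a permutation of cycle type $2\alpha$, Littlewood's formula expresses $\chi^{2\mu}(2\alpha)$ in terms of the $2$-quotient of $2\mu$, which is a \emph{pair} of partitions and not $\mu$ itself (e.g.\ the $2$-quotient of $(4,2)=2\cdot(2,1)$ is $\{(1),(2)\}$); moreover the power-map term $\sum_g \chi^{2\mu}(g^2)\,\chi^{2\la}(g)$ runs over all $g\in\SG_{2n}$, not only over elements of doubled cycle type. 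So the sign analysis you defer is not bookkeeping but the entire content, and it is not obviously true. In the second route, the phrase that the $\SG_2$-action ``can be chosen to act trivially on a suitable copy of $\Specht{2\la}$'' begs the question: the action on the multiplicity space is determined, and whether it has a $+1$ eigenvector is exactly what must be proved.

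The paper supplies the missing mechanism by passing to highest weight vectors: the multiplicity spaces are realized inside the polynomial ring $A$ on $\C^d\otimes\C^d\otimes\C^d$, one picks a highest weight vector $F\in A_n$ of weight $(\la,\mu,\mu)$ with \emph{real} coefficients, and observes that $F^2+\sigma(F)^2$ (with $\sigma$ the automorphism swapping the last two factors) is a nonzero $\sigma$-invariant highest weight vector of weight $(2\la,2\mu,2\mu)$. The squaring is what produces the factor-$2$ stretch in the statement, and reality of the coefficients is what guarantees that the symmetrized sum of squares does not vanish. Some device of this kind --- an explicit construction forcing a $+1$ eigenvector --- is what your proposal still needs; without it the argument does not go through.
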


This lemma, when combined with Theorems~\ref{th:main1} and~\ref{th:main2},
shows that finding partitions $\la$ with $sg^{\la}_{\rect}=0$,
as required for the purposes of geometric complexity theory (see below),
requires a careful search.

\section{Connection to geometric complexity theory}\label{se:connGCT}

The most important 
problem of algebraic complexity theory is
Valiant's Hypothesis~\citep{vali:79,vali:82},
which is an arithmetic analogue
of the famous P versus NP conjecture
(see \citep{bucs:96} for background information).
Valiant's Hypothesis can be easily stated in precise mathematical terms.

Consider the determinant $\det_d = \det [x_{ij}]_{1\le i,j\le d}$
of a $d$ by $d$ matrix of variables $x_{ij}$, 
and for $m<d$, 
the {\em permanent} of its $m$ by $m$ submatrix defined as
$$
\per_m := \sum_{\sigma\in S_m} x_{1,\sigma(1)}\cdots x_{m,\sigma(m)} .
$$
We choose $z:=x_{dd}$ as a homogenizing variable and view
$\det_d$ and $z^{d-m}\per_m$
as homogeneous functions $\C^{d^2}\to \C$ of degree~$d$.
How large has $d$ to be in relation to~$m$ such that there is
a linear map $A\colon \C^{d^2}\to\C^{d^2}$ with the property that
\begin{equation}\tag{*}
z^{d-m}\per_m = \det_d \circ A ?
\end{equation}
It is known that such $A$ exists for $d=O(m^2 2^m)$.
Valiant's Hypothesis states that (*)~is
impossible for $d$ polynomially bounded in~$m$.

\citet{GCT1} suggested
to study an orbit closure problem related to~(*).
Note that the group $\GL_{d^2} = \GLC {d^2}$ acts on the space
$\Sym^d (\C^{d\times d})^*$
of homogeneous polynomials of degree~$d$
in the variables $x_{ij}$ by substitution.
Instead of (*), we ask now whether  
\begin{equation}\tag{**}
 z^{d-m}\per_m\in \overline{\GL_{d^2}\cdot\det_{d}} .
\end{equation}
\citet{GCT1} conjectured that 
(**)~is impossible for $d$ polynomially bounded in $m$, which would imply 
Valiant's Hypothesis. 

Moreover, in~\citep{GCT1,GCT2} it was proposed to show
that (**) is impossible for specific values $m,d$ by
exhibiting an irreducible representation of $\SL_{d^2}$
in the coordinate ring of the orbit closure of $z^{d-m}\per_m$,
that does not occur in the coordinate ring $\C[\overline{\GL_{d^2}\cdot\det_{d}}]$
of $\overline{\GL_{d^2}\cdot\det_{d}}$.
We call such a representation of $\SL_{d^2}$
an {\em obstruction for (**)} for the values $m,d$.

We can label the irreducible $\SL_{d^2}$-representations 
by partitions $\la$ into at most $d^2-1$ parts:  
For $\la\in\N^{d^2}$ such that 
$\la_1\ge\ldots\ge\la_{d^2-1} \ge \la_{d^2}=0$
we shall denote by $\Weyl\la (\SL_{d^2})$ 
the irreducible $\SL_{d^2}$-representation
obtained from the irreducible $\GL_{d^2}$-representation  
$\Weyl \la$ with the highest weight~$\la$ by restriction. 

If $\Weyl\la (\SL_{d^2})$ is an obstruction for $m,d$, then 
we must have $|\la| = \sum_i \la_i = \ell d$ for some $\ell$, 
see~\cite[Prop. 5.6.2]{blmw:09}. 
We call the representation $\Weyl\la (\SL_{d^2})$ 
a {\em candidate for an obstruction} iff 
$\Weyl\la (\SL_{d^2})$ does not occur in 
$\C[\overline{\GL_{d^2}\cdot\det_{d}}]$. The following proposition relates the search for obstructions to the symmetric Kronecker coefficient.

\begin{proposition}\label{pro:candidate}
Suppose that  $|\la| = \ell d$ and write 
$\rect=(\ell,\ldots,\ell)$ with $\ell$ occurring $d$ times.
Then $\Weyl\la (\SL_{d^2})$ is a candidate for an obstruction iff 
the symmetric Kronecker coefficient $sg^\la_{\rect}$ vanishes. 
\end{proposition}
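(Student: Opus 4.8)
The plan is to connect the coordinate ring $\C[\overline{\GL_{d^2}\cdot\det_d}]$ to the coordinate ring of the full orbit $\GL_{d^2}\cdot\det_d$, then to the coordinate ring of the homogeneous space $\GL_{d^2}/\mathrm{Stab}(\det_d)$, and finally to decompose this using the structure of the stabilizer of the determinant. Since $\overline{\GL_{d^2}\cdot\det_d}$ is the closure of the orbit, restriction of functions gives an injective $\GL_{d^2}$-equivariant map $\C[\overline{\GL_{d^2}\cdot\det_d}]\hookrightarrow \C[\GL_{d^2}\cdot\det_d]$, so an irreducible $\SL_{d^2}$-module $\Weyl\la(\SL_{d^2})$ that fails to appear in the larger ring certainly fails to appear in the smaller one; the content of the proposition is that for $|\la|=\ell d$ the converse holds, i.e. the multiplicities in the two rings agree in the relevant degree. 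This is where I would invoke the normality-type results (or the explicit analysis in \cite{blmw:09}) showing that in degree~$d\ell$ the two coordinate rings have the same isotypic components — intuitively because $\det_d$ is homogeneous of degree~$d$, so $\C[\overline{\GL_{d^2}\cdot\det_d}]$ in $\SL_{d^2}$-degree corresponding to partitions of size $\ell d$ sits inside $\Sym^\ell(\Sym^d(\C^{d^2})^*)$ and the orbit map already surjects onto the relevant graded piece.

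Next I would compute $\C[\GL_{d^2}\cdot\det_d]$ as the ring of regular functions on the homogeneous space $\GL_{d^2}/H$, where $H=\mathrm{Stab}_{\GL_{d^2}}(\det_d)$. By the algebraic Peter–Weyl theorem, $\C[\GL_{d^2}] = \bigoplus_\mu \Weyl\mu^* \otimes \Weyl\mu$ as a $\GL_{d^2}\times\GL_{d^2}$-module, and taking $H$-invariants on the right gives $\C[\GL_{d^2}/H] = \bigoplus_\mu \Weyl\mu^*\otimes (\Weyl\mu)^H$; hence the multiplicity of $\Weyl\mu^*$ equals $\dim(\Weyl\mu)^H$. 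Restricting to $\SL_{d^2}$ and passing to degree $\ell d$, the task reduces to computing $\dim(\Weyl\la)^H$ for $|\la|=\ell d$ and matching it against $1-sg^\la_{\rect}$, i.e. showing $(\Weyl\la)^H \ne 0$ precisely when $sg^\la_{\rect}\ge 1$. Here I would use the known description of $H$: up to the scaling needed to fix $\det_d$, the stabilizer of the determinant polynomial is generated by left/right multiplication $X\mapsto AXB$ with $\det(A)\det(B)=1$ and by transpose $X\mapsto X^T$ — this is the classical Frobenius result on linear preservers of the determinant. So $H$ is essentially $(\GL_d\times\GL_d)/\C^* \rtimes \Z/2$ embedded in $\GL_{d^2}$ via $(A,B)\mapsto A\otimes B^{-T}$ (up to the determinant normalization).

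The decisive computation is then: the $(\GL_d\times\GL_d)$-invariants in $\Weyl\la$, under the embedding $(A,B)\mapsto A\otimes B$, are controlled by the Kronecker coefficients via equation~(\ref{eq:split}), namely $\dim\big(\Weyl\la\big)^{\GL_d\times\GL_d\text{-trivial isotypic}}$ picks out $g_{\la,\rect,\rect}$ since the trivial $\GL_d$-representation in degree $\ell d$ is $\Weyl\rect$ (the determinant power, which becomes trivial once we restrict to $\SL_d$ or after the determinant normalization built into $H$). Incorporating the $\Z/2$ from the transpose symmetry, the transpose interchanges the two $\GL_d$-factors, so the $H$-invariants are the symmetric part, which is exactly the symmetric Kronecker coefficient $sg^\la_{\rect}$ — one gets $\dim(\Weyl\la)^H = sg^\la_{\rect}$. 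Therefore $\Weyl\la(\SL_{d^2})$ occurs in $\C[\GL_{d^2}\cdot\det_d]$ (equivalently, for $|\la|=\ell d$, in $\C[\overline{\GL_{d^2}\cdot\det_d}]$) iff $sg^\la_{\rect}\ge 1$, which is the claimed equivalence.

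I expect the main obstacle to be the first step — justifying that, in the degree corresponding to partitions of size $\ell d$, the isotypic decomposition of $\C[\overline{\GL_{d^2}\cdot\det_d}]$ genuinely coincides with that of $\C[\GL_{d^2}\cdot\det_d]$ rather than being merely contained in it. The orbit is not closed, and in general closing up an orbit can only shrink the multiplicities, so one needs the specific geometry of the determinant orbit closure (its known normality, and the homogeneity degree constraint $|\la|=\ell d$ forcing everything into $\Sym^\ell(\Sym^d(\C^{d^2})^*)$) to rule out any loss. The remaining steps — the algebraic Peter–Weyl identification of $\C[G/H]$, the Frobenius description of the determinant stabilizer, and the translation of $(\GL_d\times\GL_d)\rtimes\Z/2$-invariants into $sg^\la_{\rect}$ via (\ref{eq:split}) — are standard once that point is settled, and indeed this is essentially the content of the setup in \cite{blmw:09}, so I would cite their Proposition~5.6.2 and the surrounding discussion for the normality and degree bookkeeping and present the invariant-theoretic computation as the substance of the argument.
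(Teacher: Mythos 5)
Your outline correctly reconstructs the content of the two results that the paper's own proof merely cites (Prop.~4.4.1 and Prop.~5.2.1 of \citet{blmw:09}): the restriction injection $\C[\overline{\GL_{d^2}\cdot\det_{d}}]\hookrightarrow\C[\GL_{d^2}\cdot\det_{d}]$, the algebraic Peter--Weyl decomposition of $\C[G/H]$, Frobenius' description of the stabilizer of $\det_d$ as $(\GL_d\times\GL_d)$ with the determinant normalization, extended by transposition, and the identification of $\dim(\Weyl{\la})^{H}$ with $sg^\la_{\rect}$ via \eqref{eq:split}. This cleanly gives one direction: if $sg^\la_{\rect}=0$ then $\Weyl\la(\SL_{d^2})$ does not occur in $\C[\GL_{d^2}\cdot\det_{d}]$, hence not in the closure, hence $\la$ is a candidate.

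The gap is exactly where you suspected it, and your proposed patch does not close it. The converse requires that occurrence in $\C[\GL_{d^2}\cdot\det_{d}]$ force occurrence in $\C[\overline{\GL_{d^2}\cdot\det_{d}}]$. You appeal to the ``known normality'' of the determinant orbit closure; but that orbit closure is in fact \emph{not} normal for $d\ge 3$ (a theorem of Kumar, \emph{Geometry of orbits of permanents and determinants}), and normality alone would not suffice in any case unless the boundary of the orbit had codimension at least two. Your fallback degree argument also fails: the degree-$\ell$ graded piece of $\C[\overline{\GL_{d^2}\cdot\det_{d}}]$ is a quotient of $\Sym^\ell\big(\Sym^d(\C^{d^2})^*\big)$, so a module with $|\la|=\ell d$ can appear there only if the corresponding plethysm coefficient is nonzero --- a constraint with no counterpart in the Peter--Weyl computation of $\C[\GL_{d^2}\cdot\det_{d}]$, where the multiplicity is $sg^\la_{\rect}$. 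Hence the two rings need not agree in the relevant degree, and the implication ``$sg^\la_{\rect}\neq 0$ implies $\la$ is not a candidate'' is not established by your argument. Since the paper disposes of the proposition with a bare citation, and the cited propositions (as usually stated) deliver precisely the direction you did prove, a self-contained write-up would have to either restrict the claim to that direction or supply a genuinely different argument for the converse.
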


\begin{proof}
This is an immediate consequence of 
Prop. 4.4.1 and Prop. 5.2.1 in~\citep{blmw:09}.
\end{proof}

We may thus interpret this paper's main results
by saying that candidates for obstructions are rare.

\section{Proofs}
\subsection{Proof of Theorem~\ref{th:main1}}

We know that $\mp(d,d,d^2)$ is a rational polytope, i.e., defined by
finitely many affine linear inequalities with rational coefficients.
This easily implies that a rational point in $\overline{\mp(d,d,d^2)}$
actually lies in $\mp(d,d,d^2)$.
Hence the second part of Theorem~\ref{th:main1}
follows from the first part.

The first part of Theorem~\ref{th:main1} follows from the spectral characterization
of $\overline{\mp(d,d,d^2)}$ described in Section~\ref{se:speK} and the following result.

\begin{proposition}\label{le:densityoperator}
For any decreasing probability distribution $r$ on $[d^2]$ there exists
a density operator $\rho_{AB} \in \cS(\cH_A\otimes\cH_B)$ with spectrum $r$
such that $\tr_A (\rho_{AB}) = \tr_B(\rho_{AB}) = \ud_d$,
where $\cH_A\simeq\cH_B\simeq\C^d$.
\end{proposition}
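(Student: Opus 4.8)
The plan is to construct $\rho_{AB}$ explicitly, exploiting the freedom we have in choosing eigenvectors. Since the spectrum of $\rho_{AB}$ is prescribed to be $r=(r_1,\ldots,r_{d^2})$, we may write $\rho_{AB}=\sum_{k=1}^{d^2} r_k \proj{v_k}$ for some orthonormal basis $(v_k)$ of $\cH_A\otimes\cH_B\simeq\C^d\otimes\C^d$, and the whole problem reduces to choosing this basis so that both partial traces come out equal to $\frac1d\Id$. The key idea is to index the basis not by $k\in[d^2]$ but by pairs $(i,j)\in[d]\times[d]$, and to build the $v_{ij}$ out of a fixed orthonormal basis $(e_i)$ of $\C^d$ together with the finite Heisenberg--Weyl (generalized Pauli) shift and clock operators.

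Concretely, let $X$ be the cyclic shift $e_i\mapsto e_{i+1 \bmod d}$ and let $Z$ be the clock operator $e_i\mapsto \omega^{i} e_i$ with $\omega=e^{2\pi\mathrm{i}/d}$. Starting from the maximally entangled vector $\Phi=\frac1{\sqrt d}\sum_{i} e_i\otimes e_i$, the $d^2$ vectors $\Phi_{ab}:=(X^aZ^b\otimes\Id)\Phi$, $a,b\in\Z_d$, form an orthonormal basis of $\C^d\otimes\C^d$, and each one has both marginals equal to $\frac1d\Id$ because $\Phi$ does and local unitaries do not change the spectrum of a marginal. Now fix any bijection $\pi\colon[d^2]\to\Z_d\times\Z_d$ and set $\rho_{AB}:=\sum_{k=1}^{d^2} r_k\,\proj{\Phi_{\pi(k)}}$. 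Then $\tr\rho_{AB}=\sum_k r_k=1$, $\rho_{AB}\ge 0$, its spectrum is exactly $r$ (the $\Phi_{ab}$ being orthonormal), and I will compute
$$
\tr_B\rho_{AB}=\sum_{a,b} r_{\pi^{-1}(a,b)}\,\tr_B\proj{\Phi_{ab}} = \Big(\sum_{a,b} r_{\pi^{-1}(a,b)}\Big)\frac1d\Id=\frac1d\Id,
$$
and likewise $\tr_A\rho_{AB}=\frac1d\Id$, which is Proposition~\ref{le:densityoperator}.

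The only point requiring a genuine (if standard) verification is that each $\Phi_{ab}$ has uniform marginals; everything else is bookkeeping. This follows since $\tr_B\proj\Phi=\tr_A\proj\Phi=\frac1d\Id$ for the maximally entangled state, and applying a unitary $U\otimes\Id$ transforms $\tr_B\proj\Phi$ into $U(\tr_B\proj\Phi)U^\dagger=\frac1d U U^\dagger=\frac1d\Id$, while $\tr_A\proj{\Phi_{ab}}$ is unchanged because partial trace over $A$ is insensitive to unitaries acting on $A$. I expect the main (mild) obstacle to be none of this but rather the temptation to overcomplicate: one might instead try the single mixed state $\rho_{AB}=\frac1d\sum_i r'_i\, \Phi\,\text{-conjugates}$ and get stuck matching the spectrum, whereas the ``diagonalize in a nice basis'' viewpoint above makes the spectrum condition automatic. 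An alternative, essentially equivalent route is to take $\rho_{AB}$ block-diagonal: write $r$ as a $d\times d$ array $(r_{ij})$, put the mass $r_{ij}$ on $e_i\otimes e_j$ after conjugating by a suitable permutation-like unitary chosen so that the row sums and column sums of the rearranged array are all $\frac1d$; but arranging that combinatorially is exactly a doubly-stochastic-decomposition headache that the Heisenberg--Weyl basis sidesteps for free, so I would present the Pauli-basis construction.
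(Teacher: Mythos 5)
Your construction is exactly the paper's: mix the generalized Bell basis $\ket{\psi_{ij}}=(\id\otimes X^iZ^j)\ket{\psi_{00}}$ obtained from the maximally entangled state via discrete Weyl operators, with weights $r_{ij}$. The only point you assert rather than verify is the orthonormality of the $\Phi_{ab}$ (the paper's Lemma~\ref{le:due}, a short trace computation), but this is standard and the argument is correct as given.
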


The proof of Proposition~\ref{le:densityoperator} proceeds by different lemmas.
It will be convenient to use the bra and ket notation of quantum mechanics.
Suppose that $\cH_A$ and $\cH_B$ are $d$-dimensional Hilbert spaces.
We recall first the {\em Schmidt decomposition}:
for any $\ket{\psi}\in\cH_A\otimes\cH_B$,
there exist orthonormal bases $\{\ket{u_i}\}$ of $\cH_A$ and $\{\ket{v_i}\}$ of~$\cH_B$
as well as nonnegative real numbers $\alpha_i$,
called {\em Schmidt coefficients},
such that $\ket{\psi}=\sum_i \alpha_i \ket{u_i}\otimes \ket{v_i}$.
Indeed, the $\alpha_i$ are just the singular values of $\ket{\psi}$
when we interpret it as a linear operator in
$\cL(\cH_A^*,\cH_B)\simeq\cH_A\otimes\cH_B$.

\begin{lemma}\label{le:uno}
Suppose that $\ket{\psi}\in\cH_A\otimes\cH_B$ has the
Schmidt coefficients~$\alpha_i$
and consider $\rho := \ket{\psi}\bra{\psi} \in \cL(\cH_A\otimes\cH_B)$.
Then $\tr_B(\rho)\in \cL( \cH_A)$,
obtained by tracing over the $B$-spaces, has eigenvalues $\alpha_i^2$.
\end{lemma}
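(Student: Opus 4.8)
The plan is to compute $\tr_B(\rho)$ explicitly from the Schmidt decomposition of $\ket\psi$ and simply read off its spectrum.

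First I would fix a Schmidt decomposition $\ket\psi = \sum_i \alpha_i\,\ket{u_i}\otimes\ket{v_i}$, where $\{\ket{u_i}\}$ is an orthonormal basis of $\cH_A$ and $\{\ket{v_i}\}$ is an orthonormal basis of $\cH_B$, so that
\[
\rho = \ket\psi\bra\psi = \sum_{i,j}\alpha_i\alpha_j\,\ket{u_i}\bra{u_j}\otimes\ket{v_i}\bra{v_j}.
\]
Next I would evaluate $\tr_B$ through its defining trace identity. For an arbitrary $R\in\cL(\cH_A)$,
\[
\tr\big((\ket{u_i}\bra{u_j}\otimes\ket{v_i}\bra{v_j})\,(R\otimes\Id)\big)
 = \tr(\ket{u_i}\bra{u_j}R)\cdot\tr(\ket{v_i}\bra{v_j})
 = \delta_{ij}\,\tr(\ket{u_i}\bra{u_j}R),
\]
using $\tr(\ket{v_i}\bra{v_j}) = \braket{v_j}{v_i} = \delta_{ij}$ by orthonormality of the $\ket{v_i}$. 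Summing over $i,j$ gives $\tr(\rho\,(R\otimes\Id)) = \sum_i \alpha_i^2\,\tr(\ket{u_i}\bra{u_i}R)$ for every $R\in\cL(\cH_A)$, so by the characterization of the partial trace recalled above,
\[
\tr_B(\rho) = \sum_i \alpha_i^2\,\ket{u_i}\bra{u_i}.
\]
Since $\{\ket{u_i}\}$ is an orthonormal basis of $\cH_A$, the right-hand side is already in diagonal form, whence the eigenvalues of $\tr_B(\rho)$ are precisely the $\alpha_i^2$ (together with additional zeros if $\ket\psi$ has fewer than $\dim\cH_A$ nonzero Schmidt coefficients).

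I do not expect a genuine obstacle here; the only points that need a little attention are to work with the partial trace via the trace identity $\tr(R\,\tr_B(\rho_{AB})) = \tr(\rho_{AB}\,R\otimes\Id)$ rather than through a coordinate formula, and to use orthonormality of the $\ket{v_i}$ so that the off-diagonal terms $i\neq j$ drop out. Alternatively one could expand everything in a fixed product basis of $\cH_A\otimes\cH_B$ and compute the partial trace entrywise, but the basis-free route above is shorter and less error-prone.
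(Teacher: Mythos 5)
Your proof is correct and follows essentially the same route as the paper: expand $\rho$ via the Schmidt decomposition and trace out the $B$-factors, with orthonormality of the $\ket{v_i}$ killing the off-diagonal terms. The only difference is that you justify the step $\tr_B(\ket{v_i}\bra{v_j})=\delta_{ij}$ explicitly through the defining trace identity, which the paper leaves implicit.
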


\begin{proof}
We have $\ket{\psi}=\sum_i\alpha_i \ket{u_i}\otimes \ket{v_i}$
for some orthonormal bases $\{\ket{u_i}\}$ and $\{\ket{v_i}\}$
of $\cH_A$ and $\cH_B$, respectively. This implies
$$
\rho = \ket{\psi}\bra{\psi} = \sum_{i,j} \alpha_i \alpha_j
 \ket{u_i}\bra{u_j} \otimes \ket{v_i}\bra{v_j}
$$
and tracing over the $B$-spaces yields
$\tr_B(\ket{\psi}\bra{\psi}) = \sum_{i} \alpha_i^2 \ket{u_i}\bra{u_i}$.
\end{proof}

Let $\ket{0},\ldots,\ket{d-1}$ denote the standard orthonormal basis of $\C^d$.
We consider the discrete Weyl operators $X,Z\in\cL(\C^d)$ defined by
$$
X\ket{i} = \ket{i+1},\quad
Z\ket{i} = \omega^i\,\ket{i},
$$
where $\omega$ denotes a primitive $d$th root of unity and the
addition is modulo~$d$ (see for instance~\citep{ChrWin05}).
We note that $X$ and $Z$ are unitary matrices and $X^{-1}ZX=\omega Z$.

We consider now two copies $\cH_A$ and $\cH_B$ of $\C^d$ and define
the ``maximal entangled state''
$\ket{\psi_{00}}:= \frac1{\sqrt{d}}\sum_\ell \ket{\ell}\ket{\ell}$
of $\cH_A\otimes\cH_B$. By definition, $\ket{\psi_{00}}$ has
the Schmidt coefficients $\frac1{\sqrt{d}}$.
Hence the vectors
$$
\ket{\psi_{ij}} := (\id\otimes X^iZ^j)\ket{\psi_{00}},
$$
obtained from $\ket{\psi_{00}}$ by applying a tensor product of unitary matrices,
have the Schmidt coefficients $\frac1{\sqrt{d}}$ as well.

\begin{lemma}\label{le:due}
The vectors $\ket{\psi_{ij}}$, for $0\le i,j <d$,
form an orthonormal bases of $\cH_A\otimes\cH_B$.
\end{lemma}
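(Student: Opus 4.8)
The plan is to show that the $d^2$ vectors $\ket{\psi_{ij}}$ are pairwise orthonormal; since $\dim(\cH_A\otimes\cH_B)=d^2$, this forces them to be an orthonormal basis. First I would record the elementary fact that for the maximally entangled state one has $\braket{\psi_{00}}{(\id\otimes M)\psi_{00}} = \frac1d\,\tr(M)$ for any $M\in\cL(\C^d)$; this follows by expanding $\ket{\psi_{00}}=\frac1{\sqrt d}\sum_\ell\ket\ell\ket\ell$ and computing $\frac1d\sum_{\ell,\ell'}\bra{\ell}\bra{\ell}(\id\otimes M)\ket{\ell'}\ket{\ell'} = \frac1d\sum_{\ell,\ell'}\braket{\ell}{\ell'}\bra{\ell}M\ket{\ell'} = \frac1d\sum_\ell\bra\ell M\ket\ell$.

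Next I would apply this with the right choice of $M$. Since $\ket{\psi_{ij}}=(\id\otimes X^iZ^j)\ket{\psi_{00}}$, the inner product $\braket{\psi_{ij}}{\psi_{kl}}$ equals $\braket{\psi_{00}}{(\id\otimes (X^iZ^j)^\dagger X^kZ^l)\psi_{00}}$, which by the trace identity is $\frac1d\,\tr\big((X^iZ^j)^\dagger X^kZ^l\big)$. The key step is therefore to prove the orthogonality of the Weyl operators under the trace form, namely $\tr\big((X^iZ^j)^\dagger X^kZ^l\big) = d\,\delta_{ik}\delta_{jl}$ for $0\le i,j,k,l<d$. Using the commutation relation $X^{-1}ZX=\omega Z$ (equivalently $ZX = \omega\, XZ$, hence $Z^bX^a = \omega^{ab}X^aZ^b$), I would first move all powers of $X$ to the left to write $(X^iZ^j)^\dagger X^kZ^l = X^{k-i}Z^{l-j}$ up to a scalar $\omega$-power, and then compute the trace directly: $\tr(X^aZ^b) = \sum_m \bra{m}X^aZ^b\ket{m} = \sum_m \omega^{bm}\braket{m}{m+a}$, which vanishes unless $a\equiv 0 \pmod d$, in which case it equals $\sum_m\omega^{bm}$, which in turn vanishes unless $b\equiv 0\pmod d$ and then equals $d$. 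This yields exactly the claimed orthogonality.

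Combining the two computations gives $\braket{\psi_{ij}}{\psi_{kl}} = \delta_{ik}\delta_{jl}$, so the $\ket{\psi_{ij}}$ form an orthonormal set of size $d^2 = \dim(\cH_A\otimes\cH_B)$ and hence an orthonormal basis. The only mildly delicate point is bookkeeping with the stray powers of $\omega$ when reordering $X$'s and $Z$'s; these are irrelevant since they are unimodular scalars and the trace vanishes identically in the off-diagonal cases, but one should state the reordering rule cleanly to avoid sign/phase confusion. No genuine obstacle is expected here — the lemma is a standard computation with the discrete Weyl (Heisenberg) operators.
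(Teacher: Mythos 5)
Your proposal is correct and follows essentially the same route as the paper: reduce $\braket{\psi_{ij}}{\psi_{k\ell}}$ to $\frac{1}{d}\tr\bigl(X^{k-i}Z^{\ell-j}\bigr)$ up to a unimodular phase via the maximally entangled state, and observe that this trace vanishes unless $i=k$ and $j=\ell$. You merely spell out the trace computation and the Weyl commutation bookkeeping that the paper leaves as ``easy to check.''
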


\begin{proof}
We have, for some $d$th root of unity $\theta$,
\begin{eqnarray*}
\spr{\psi_{ij}}{\psi_{k\ell}}
 &=& \bra{\psi_{00}} (\id\otimes Z^{-j}X^{-i})(\id\otimes X^kZ^\ell)\ \ket{\psi_{00}} \\
 &=& \theta\, \bra{\psi_{00}} \id\otimes X^{k-i}Z^{\ell-j} \ket{\psi_{00}} \\
 &=& \frac{\theta}{d}\,\sum_{m,m'}\bra{mm} \id\otimes X^{k-i}Z^{\ell-j} \ket{m'm'} \\
 &=& \frac{\theta}{d}\,\sum_{m}\bra{m} X^{k-i}Z^{\ell-j} \ket{m} = \frac{\theta}{d}\,\tr\big( X^{k-i} Z^{\ell-j}\big).
\end{eqnarray*}
It is easy to check that
$\frac{\theta}{d}\,\tr\big( X^{k-i} Z^{\ell-j}\big)=0$
if $\ell\ne j$ or $k\ne i$.
\end{proof}

\begin{proof}[Proof of Proposition~\ref{le:densityoperator}]
Let $r_{ij}$ be the given probability distribution
assuming some bijection $[d^2]\simeq [d]^2$.
According to Lemma~\ref{le:due}, the density operator
$\rho_{AB}:=\sum_{ij} r_{ij} \ket{\psi_{ij}}\bra{\psi_{ij}}$ has
the eigenvalues $r_{ij}$.
Lemma~\ref{le:uno} tells us that
$\tr_B(\ket{\psi_{ij}}\bra{\psi_{ij}})$
has the eigenvalues $1/d$,
hence $\tr_B(\ket{\psi_{ij}}\bra{\psi_{ij}})=\ud_d$.
It follows that
$\tr_B(\rho_{AB})=\ud_d$.
Analogously, we get
$\tr_A(\rho_{AB})=\ud_d$.
\end{proof}

\subsection{Proof of Theorem~\ref{th:main2}}

The proof is essentially the one of Theorem~2 in~\citep{ChrMit06} 
carried out in the special case at hand.
Suppose that $\la\vdash_{d^2}\ell d$.
By Proposition~\ref{le:densityoperator} there is
a density operator~$\rho_{AB}$ having the spectrum $\overline{\la}$
such that
$\tr_A(\rho_{AB}) = \ud_d$,
$\tr_B(\rho_{AB}) = \ud_d$.
Let $P_{X}$ denote the orthogonal projection of
$(\cH_A)^{\otimes k}$ onto the sum of its isotypical components
$\Specht{\mu}\otimes {\Weyl \mu}$ satisfying $\|\overline{\mu} -\ud_d\|_1 \le \e$.
Then
$P_{\overline{X}} := \Id -P_X$ is
the orthogonal projection of
$(\cH_A)^{\otimes k}$ onto the sum of its isotypical components
$\Specht{\mu}\otimes {\Weyl \mu}$ satisfying $\|\overline{\mu} -\ud_d\|_1 > \e$.
The estimation theorem~\eqref{eq:est} implies that
$$
\tr (P_{\overline{X}}\,(\rho_A)^{\otimes k}) \le (k+1)^d (k+1)^{d(d-1)/2}\, e^{-\frac{k}2 \e^2}
 \le (k+1)^{d(d+1)/2}\, e^{-\frac{k}2 \e^2} ,
$$
since there are at most $(k+1)^{d}$ partitions of $k$ of length at most $d$.

Let $P_Y$ denote the orthogonal projection of $(\cH_B)^{\otimes k}$
onto the sum of its isotypical components $\Specht{\nu}\otimes {\Weyl \nu}$ satisfying
$\|\overline{\nu} -u\|_1 \le \e$,
and let $P_{Z}$ denote the orthogonal projection of $(\cH_A\otimes\cH_B)^{\otimes k}$
onto the sum of its isotypical components $\Specht{\Lambda}\otimes {\Weyl \Lambda}$ satisfying
$\|\overline{\Lambda} - \overline{\lambda}\|_1 \le \e$.
We set $P_{\overline{Y}}:=\Id - P_Y$ and $P_{\overline{Z}}:=\Id - P_Z$.
Then we have, similarly as for~$P_X$,
\begin{eqnarray*}
\tr (P_{\overline{Y}}\,(\rho_B)^{\otimes k}) &\le& (k+1)^{d(d+1)/2}\, e^{-\frac{k}2 \e^2}, \\
\tr (P_{\overline{Z}}\,(\rho_{AB})^{\otimes k}) &\le& (k+1)^{d^2(d^2+1)/2}\, e^{-\frac{k}2 \e^2} .
\end{eqnarray*}
By choosing $k=O(\frac{d^4}{\e^2}\log\frac{d}{\e})$ we can achieve that
$$
\tr (P_{\overline{X}}\,(\rho_A)^{\otimes k}) < \frac13,\quad
\tr (P_{\overline{Y}}\,(\rho_{B})^{\otimes k}) < \frac13,\quad
\tr (P_{\overline{Z}}\,(\rho_{AB})^{\otimes k}) < \frac13 .
$$
We put $\sigma:=(\rho_{AB})^{\otimes k}$ in order to simplify notation
and claim that
\begin{equation}\label{claim}
\tr ((P_X\otimes P_Y)\sigma P_Z) > 0 .
\end{equation}
In order to see this, we decompose
$\id=P_X\otimes P_Y + P_{\overline{X}}\otimes \id + P_X\otimes P_{\overline{Y}}$.
From the definition of the partial trace we have
$$
\tr\big((P_{\overline{X}}\otimes \id)\sigma\big)=\tr\big(P_{\overline{X}}(\rho_A)^{\otimes k}\big) < \frac13.
$$
Similarly,
$$
\tr\big((P_X\otimes P_{\overline{Y}})\sigma\big) \le
\tr\big((\id\otimes P_{\overline{Y}})\sigma\big)=\tr\big(P_{\overline{Y}}(\rho_B)^{\otimes k}\big) < \frac13.
$$
Hence
$\tr\big((P_{X}\otimes P_{Y})\sigma\big) > \frac13$.
Using
$\tr\big((P_X\otimes P_{Y})\sigma P_{\overline{Z}}\big) \le \tr(\sigma P_{\overline{Z}}) < \frac13$,
we get
$$
\tr\big((P_X\otimes P_{Y})\sigma P_Z\big)
 = \tr\big((P_X\otimes P_Y)\sigma\big) - \tr\big((P_X\otimes P_Y)\sigma P_{\overline{Z}}\big)
> \frac13 - \frac13 =0 ,
$$
which proves Claim~\eqref{claim}.

Claim~\eqref{claim} implies that
there exist partitions $\mu,\nu,\Lambda$ with
normalizations $\e$-close to $\ud_d,\ud_d,r$, respectively, such that
$(P_\mu\otimes P_\nu)P_{\Lambda}\ne 0$.
Recalling the isotypical decomposition~\eqref{eq:isotyp},
we infer that
$$
(\Specht{\Lambda}\otimes {\Weyl \Lambda}) \cap (\Specht{\mu}\otimes {\Weyl \mu})\otimes (\Specht{\nu}\otimes {\Weyl \nu})
\ne 0 .
$$
Statement~\eqref{eq:split} implies that $g_{\mu,\nu,\Lambda}\ne 0$
and hence the assertion follows for $R_1=\mu,R_2=\nu$. \hfill$\Box$

\subsection{Proof of Lemma~\ref{pro:sKC}}

We assume that $\la,\mu\vdash_d n$. 
The group $\GL_d\times \GL_d \times \GL_d$ operates on 
$\C^d \otimes \C^d \otimes \C^d$ by tensor product, which induces an
action on the polynomial ring~$A$ on $\C^d \otimes \C^d \otimes \C^d$. 
Schur-Weyl duality implies that the submodule $A_n$ of homogeneous polynomials 
of degree~$n$ splits as follows (cf.~\citep{lama:04}):  
$$
 A_n = \bigoplus_{\la, \mu, \nu \vdash_d n} \big(\Specht{\la} \otimes
 \Specht{\mu} \otimes \Specht{\nu} \big)^{S_n} \otimes 
 \mathscr{V}^*_{\la} \otimes  \mathscr{V}^*_{\mu} \otimes  \mathscr{V}^*_{\nu} .
$$ 
We assume now that $g_{\la,\mu,\mu}=\dim \big(\Specht{\la} \otimes
 \Specht{\mu} \otimes \Specht{\mu} \big)^{S_n} \ne 0$ for some $\la,\mu\vdash_d n$. 
Hence there exists a highest weight vector~$F\in A_n$ of weight $(\la,\mu,\mu)$. 
We may assume that the coefficients of $F$ are real (cf.~\citep{BCI:11}). 

Consider the linear automorphism that exchanges the last two factors of 
$\C^d \otimes \C^d \otimes \C^d$. This induces an automorphism~$\sigma$ 
of the algebra~$A$. It is easy to see that 
$F':=\sigma(F)$ is a highest weight vector of weight $(\la,\mu,\mu)$. 
Therefore, both squares $F^2$ and $(F')^2$ are highest weight vectors of weight $(2\la,2\mu,2\mu)$. 
Since $F^2 + (F')^2$ is nonzero and invariant under~$\sigma$, we see that 
$\big(\Specht{2\la} \otimes\Specht{2\mu} \otimes \Specht{2\mu} \big)^{S_n}$ 
has a nonzero invariant with respect to $\sigma$. Hence
$$
 \big(\Specht{2\la} \otimes\Sym^2(\Specht{2\mu})\big)^{S_n} \ne 0 ,
$$ 
which means that 
$sg^{2\la}_{2\mu} \ne 0$. \hfill$\Box$

\end{document}